\renewcommand{\bar}[1]{#1\llap{$\overline{\phantom{\rm#1}}$}}
\DeclareMathOperator{\Hom}{Hom}
\DeclareMathOperator{\Per}{Per}\DeclareMathOperator{\PGL}{PGL}
\DeclareMathOperator{\GL}{GL}
\newcommand{\col}{\,{:}\,}
\newcommand{\tth}{^{\operatorname{th}}}
\newcommand{\sst}{^{\operatorname{st}}}
\newcommand{\nnd}{^{\operatorname{nd}}}
\theoremstyle{plain}
\newtheorem{thm}{Theorem}
\newtheorem{lem}[thm]{Lemma}
\newtheorem{prop}[thm]{Proposition}
\newtheorem{cor}[thm]{Corollary}
\theoremstyle{definition}
\newtheorem{defn}[thm]{Definition}
\newtheorem*{conj}{Conjecture}
\newtheorem{exmp}[thm]{Example}
\theoremstyle{remark}
\newtheorem{case}{Case}
\def\Z{\mathbb{Z}}
\def\Q{\mathbb{Q}}
\def\P{\mathbb{P}}
\def\A{\mathbb{A}}
\def\F{\mathbb{F}}
\def\C{\mathbb{C}}
\author[Hutz, Tepper]
{Benjamin Hutz and Michael Tepper}
\subjclass[2010]{37P05, 37P45.}
\keywords{moduli space, dynamical systems, McMullen's theorem, multiplier spectrum}
\address{Department of Mathematical Sciences\\
Florida Institute of Technology\\
150 W. University Blvd\\
Melbourne, Florida 32901
}
\email{bhutz@fit.edu}
\address{Division of Science and Engineering \\
Penn State Abington\\
1600 Woodland Rd
Abington, PA 19001-3918}
\email{mlt16@psu.edu}
\begin{document}
\title{Multiplier Spectra and the Moduli Space of Degree 3 Morphisms on $\P^1$}

\maketitle

\begin{abstract}
    The moduli space of degree $d$ morphisms on $\P^1$ has received much study.  McMullen showed that, except for certain families of Latt\`es maps, there is a finite-to-one correspondence (over $\C$) between classes of morphisms in the moduli space and the multipliers of the periodic points.  For degree $2$ morphisms Milnor (over $\C$) and Silverman (over $\Z$) showed that the correspondence is an isomorphism \cite{Milnor,Silverman9}.  In this article we address two cases with algebraic methods: polynomial maps of any degree and rational maps of degree $3$.
\end{abstract}

\section{Introduction}
    In this article we examine the moduli space of degree $d$ morphisms of $\P^1$ over an alebraically closed field of charactersitic zero.  In particular, we examine the degree of a finite-to-one map from the moduli space to a parameter space described by McMullen \cite{McMullen2}.  We focus on the explicit computation of the number of necessary parameters and the degree of this map for polynomial morphisms of any degree and rational morphisms of degree $3$.  The case $d=2$ was done by Milnor \cite{Milnor}.  We focus on the case $d=3$ for rational maps. There are several articles that discuss the use of fixed point multlipliers to parameterize polynomial maps, \cite{Fujimura, Nishizawa, Sugiyama}. Here, we focus on including information from higher order multipliers.

    Let $\Hom_d$ be the space of degree $d$ endomorphisms of $\P^1$.  Let $\phi \in \Hom_d$, after choosing coordinates for $\P^1$, we may represent the coordinates of $\phi$ as two degree $d$ homogeneous polynomials with no common zeros.  We may consider $\Hom_d \subset \P^{2d+1}$ by identifying a morphism $\phi$ with its tuple of coefficients. There is a natural action of $\PGL_{2}$ by conjugation on $\Hom_d$, which extends to $\P^{2d+1}$, and we get a moduli space $M_d= \Hom_d/\PGL_{2}$ \cite{Levy,Silverman9}.  The moduli space $M_d$, and its generalization to $\P^N$, has received much study, for example, see \cite{DeMarco, Levy, Manes3, McMullen2, Milnor, Silverman9}.  Milnor \cite{Milnor} gave an isomorphism $M_2 \cong \A^2$ over $\C$.  Silverman in \cite{Silverman9} extended this to $\Z$ in addition to showing that $M_d$ is an affine integral scheme over $\Z$. However, for $d>2$ less is known about the structure of $M_d$.  McMullen \cite{McMullen2} showed that there is a finite-to-one correspondence (over $\C$) between classes of morphisms in the moduli space and certain conjugation invariants called multipliers.  It is known that by considering just the fixed point multipliers for polynomial maps of degree $d$, this correspondence is $(d-2)!$-to-$1$, \cite{Fujimura}. It is this correspondence of McMullen that we study in this article.  We next supply the necessary definitions to state the correspondence precisely.
    \begin{defn}
       Let $\phi \in \Hom_d$ and $\Per_n(\phi) = \{P \in \P^1 \col \phi^n(P) = P\}$ be the set of periodic points of period $n$ for $\phi$. For $P \in \Per_n(\phi)$, $\phi^n$ induces a map on the cotangent space of $\P^1$ at $P$ to itself. The induced map is an element of $\GL_1$ (a scalar) called the \emph{multiplier} at $P$ and is denoted $\lambda_P(\phi)$.

      The \emph{$n$-multiplier spectrum} is the set $\Lambda_n = \{\lambda_P(\phi) \col P \in \Per_n(\phi)\}$, where the multipliers are taken with appropriate multiplicity.

       Define $\sigma_{n,i}$ for $1 \leq i \leq d^n+1$ as the $i\tth$ elementary symmetric function on the $n$-multiplier spectrum.  We denote the $(d^n+1)$-tuple $\boldsymbol\sigma_n = (\sigma_{n,1},\ldots,\sigma_{n,d^n+1})$.
    \end{defn}
    It is an easy application of the chain rule to show that $\Lambda_n$ is invariant under conjugation, as an unordered set.  The multipliers depend algebraically, but not rationally, on the coefficients on $\phi$.  However, the $\sigma_{n,i}$ are actually rational functions in the coefficients of $\phi$ \cite{Milnor, Silverman9}.  Furthermore, the $\sigma_{n,i}$ are regular functions on $M_d$ \cite{Silverman9}.
    \begin{defn}
        Define the map
        \begin{align*}
            \tau_{d,n}:M_d &\to \A^k\\
            [\phi] &\mapsto (\boldsymbol\sigma_1,\boldsymbol\sigma_2,\ldots,\boldsymbol\sigma_{n}).
        \end{align*}
        We define the \emph{degree of $\tau_{d,n}$} as the number of points in $\tau_{d,n}^{-1}(P)$ for a generic point $P$ in $\tau_{d,n}(M_d)$.  From \cite[Theorem 4.54]{Silverman10} the degree of $\tau_{d,n}$ stabilizes as $n \to \infty$ and we write $\deg(\tau_d)$ for this value.
    \end{defn}
    Specifically, McMullen \cite[Corollary 2.3]{McMullen2} showed that there are finitely many conjugacy classes $[\phi] \in \Hom_d$ with a given set of multiplier spectra if $\phi$ is not a flexible Latt\`es map.  This is typically stated as the following theorem.
    \begin{thm}\cite[Corollary 2.3]{McMullen2}
        Fix $d \geq 2$.  For $n$ sufficiently large, $\tau_{d,n}$ is finite-to-one on $M_d(\C)$ except for certain families of Latt\`es maps.
    \end{thm}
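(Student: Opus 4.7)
The plan is to argue by contradiction: suppose $\tau_{d,n}$ fails to be finite-to-one for all $n$. Since $M_d$ is an integral affine scheme and each $\tau_{d,n}$ is a morphism of algebraic varieties, a non-finite generic fiber is positive-dimensional. Combined with the stabilization statement \cite[Theorem 4.54]{Silverman10} already invoked in the definition, this yields a positive-dimensional irreducible subvariety $V \subset M_d$ on which every symmetric function $\sigma_{n,i}$ is constant for all $n$ and $i$. I would then lift a smooth analytic arc $t \mapsto [\phi_t]$ in $V$ to an analytic family $\phi_t \in \Hom_d(\C)$ of non-$\PGL_2$-equivalent maps with $\Lambda_n(\phi_t)$ independent of $t$ (as unordered sets, modulo continuity of the chosen ordering along the arc).

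Next I would translate the multiplier-constancy condition into an analytic rigidity statement. Differentiating the family at $t=0$ gives an infinitesimal deformation $\dot\phi$ of $\phi = \phi_0$, which I would interpret as a class in a suitable deformation-theoretic $H^1$; the standard dictionary of Teichm\"uller theory for rational maps (as developed by McMullen and Sullivan) identifies nontrivial such classes modulo $\PGL_2$ with $\phi$-invariant Beltrami differentials $\mu$ on $\P^1(\C)$ supported on the Julia set $J(\phi)$. The constancy of every multiplier $\lambda_P(\phi_t)$ along the periodic cycles, together with density of (repelling) periodic points in $J(\phi)$, would force $\mu$ to be a nonzero invariant Beltrami coefficient, which in turn defines a measurable $\phi$-invariant line field on $J(\phi)$.

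The main step is then to invoke the no-invariant-line-fields theorem for rational maps: a rational map of degree $\geq 2$ admits a measurable invariant line field on its Julia set only if it is a flexible Latt\`es map (this is due to McMullen in the general case, and is the analytic heart of the argument, building on Ma\~n\'e--Sad--Sullivan for the hyperbolic subcase). This contradicts the hypothesis that $\phi$ lies outside the Latt\`es locus, so no positive-dimensional fiber of $\tau_{d,n}$ can exist off that locus. By Noetherian-ness of $M_d$ the generic fiber cardinality is finite, and by the stabilization theorem it stabilizes for $n$ sufficiently large, giving the claim.

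The hard part is justifying the deformation-theoretic dictionary: translating the purely algebraic hypothesis \emph{all symmetric functions of the multiplier spectra agree} into the purely analytic conclusion \emph{there exists a nontrivial invariant line field on $J(\phi)$}. Multiplier constancy a priori only controls the derivatives of $\phi^n$ at its finitely many periodic points, so I would need to promote this pointwise information to a global quasiconformal deformation; this requires estimates ensuring the infinitesimal deformation is genuinely tangent to the conjugacy class of $\phi$ inside $\Hom_d$ rather than tangent to the $\PGL_2$-orbit, and a density argument along repelling periodic points to extend control from a set to $J(\phi)$. Once the invariant line field is in hand, the remainder of the proof is an application of McMullen's rigidity theorem, which I would cite as a black box.
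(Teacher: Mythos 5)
The paper offers no proof of this statement: it is quoted verbatim from McMullen \cite[Corollary 2.3]{McMullen2}, so the only thing to compare your sketch against is McMullen's own argument, whose general shape (positive-dimensional isospectral family $\Rightarrow$ stable family $\Rightarrow$ invariant line field on the Julia set $\Rightarrow$ Latt\`es) you have correctly reproduced. The fatal problem is the black box you invoke at the main step. The assertion that ``a rational map of degree $\geq 2$ admits a measurable invariant line field on its Julia set only if it is a flexible Latt\`es map,'' attributed to McMullen ``in the general case,'' is not a theorem: it is the no-invariant-line-fields \emph{conjecture}, which is open and is well known to be equivalent to the density of hyperbolic maps in $\Rat_d$ (the Fatou conjecture). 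If your proof compiled as written, it would resolve that conjecture. What McMullen actually proves and uses is a strictly weaker rigidity statement: an invariant line field that enjoys some regularity --- roughly, one that is holomorphic/univalent on an open set meeting the Julia set --- forces the torus-covering (Latt\`es) structure. The extra regularity is exactly what the isospectral hypothesis buys: constancy of the multipliers along the deformation, read in Koenigs linearizing coordinates at the dense set of repelling periodic points, constrains the line field near those points far beyond mere measurability. So the step you defer as ``the hard part'' (promoting pointwise multiplier data to a line field) and the step you treat as a known theorem are really one and the same analytic heart of the proof, and neither can be cited away.

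A secondary inaccuracy: nontrivial infinitesimal deformations modulo $\PGL_2$ are \emph{not} all represented by invariant Beltrami differentials supported on the Julia set; the space of invariant Beltrami differentials splits into a Julia part and a Fatou part (deformations of attracting multipliers, rotation numbers of Siegel disks and Herman rings, relative positions of critical orbits in basins, etc.). Ruling out, or separately controlling, the Fatou-supported directions under the isospectral hypothesis is a genuine piece of McMullen's argument that your outline skips. As written, the proposal is therefore a plausible roadmap to the literature rather than a proof.
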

    Two maps which have the same set of multipliers are called \emph{isospectral}. All flexible Latt\`es maps (integer multiplication on elliptic curves) are isospectral, since they all have the same multiplier spectra.  Additionally, Latt\`es maps associated to non-isomorphic elliptic curves are non-conjugate. In other words, the regular maps $\sigma_{n,i}$ are constant on the family of flexible Latt\`es maps.  Furthermore, one can use rigid Latt\`es maps to show that $\deg(\tau_d) \geq C_{\epsilon}d^{\frac{1}{2}-\epsilon}$ for some constant $C$.  In particular, for squarefree $d$, $\deg(\tau_d) \geq h$ where $h$ is the class number of $\Q(\sqrt{-d})$ \cite[\S 6.6]{Silverman10}.  In this article we address two cases on $\P^1$: polynomial maps of any degree and rational maps of degree $3$.

\section{Results}

    In Section \ref{sect_polynomials} we examine the locus of polynomial maps in $M_d$.  A \emph{polynomial map} is a map with a totally ramified fixed point.  We denote $\tilde{\tau}_{d,n}$ as the restriction of $\tau_{d,n}$ to the space of polynomial maps of degree $d$.

    \begin{thm} \label{thm_poly_2_mult}
        The maps $\tilde{\tau}_{k,2}$ for $k \in \{2,3,4,5\}$ are one-to-one.
    \end{thm}
    For $k=2$ and $3$ this was already known, since $\tilde{\tau}_{k,1}$ is one-to-one for $k=2$ and $3$. However, $\tilde{\tau}_{k,1}$ is not one-to-one for $k >2$.  In particular its degree $(k-2)!$ \cite{Fujimura}. Our methods work in theory for any $k$, but $k=5$ is the current feasible computational limit. We conjecture the general behavior.
    	\begin{conj}
		McMullen's mulitplier map is one-to-one when restricted to polynomials using the $1\sst$ and $2\nnd$ multiplier spectra.
	\end{conj}

    In Section \ref{sect_degree3} we consider rational maps and examine $M_3$.  Our main result is computing the degree of the correspondence $\tau_{3,2}$ (the number of isospectral maps up to conjugation equivalence) using Groebner bases.
    \begin{thm} \label{thm_32}
        The value $\deg(\tau_{3,2}) = 12$.
    \end{thm}
    While this value represents an upper bound on $\deg(\tau_3)$ it is not necessarily equal to $\deg(\tau_3)$. See Example \ref{exmp_poly} for an example where additional multiplier information causes the degree to decrease.  While our methods theoretically would allow us to compute $\deg(\tau_3)$, the computations with additional multiplier information did not finish.

    An interesting open question is to determine what information uniquely specifies a class in $M_d$.  Towards this end, in Section \ref{sec_mult_fixed} we restrict to the case of a generic map, where generic is the open dense subset of $M_d$ of maps with distinct fixed points.
    \begin{thm} \label{thm_3}
        If $\phi$ has distinct fixed points, then the following correspondence is one-to-one:
        \begin{align*}
            \tau_{d,1}^{+}:M_d \to \A^{2d+1}\\
              [\phi] \mapsto (\boldsymbol\sigma_1,\Per_1(\phi)).
        \end{align*}
    \end{thm}

\section{Polynomials} \label{sect_polynomials}
    We first examine polynomial maps, $\phi$ with a totally ramified fixed point.
    \begin{defn}
        We denote $P_d \subset M_d$ as the moduli space of degree $d$ polynomial maps.

        Define the restriction of $\tau_{d,n}$ to the space of polynomial maps as
    		\begin{align*}
        		\tilde{\tau}_{d,n}:P_d  &\to \A^{k}\\
        		[\phi] &\mapsto (\boldsymbol\sigma_1,\ldots,\boldsymbol\sigma_n).
    		\end{align*}
    \end{defn}
    Any given polynomial map may be conjugated to the form
    \begin{equation} \label{eq_poly_normal_form}
        z^d + a_{2}z^{d-2} + \cdots + a_{d}.
    \end{equation}
    Note however that this is not a normal form since multiple such polynomials may be conjugate.

\subsection{Multipliers of fixed points}
    \begin{prop} \label{prop_sigma_relation}
        Let $[\phi] \in M_d$. The symmetric functions $\boldsymbol\sigma_1$ satisfy
        \begin{equation*}
            (-1)^{d+2}\sigma_{1,d+1} + (-1)^{d-1}\sigma_{1,d-1} + (-1)^{d-2} 2\sigma_{1,d-2} + \cdots  - (d-1)\sigma_{1,1} +d=0.
        \end{equation*}
    \end{prop}
    \begin{proof}
        Label the multipliers of the fixed points as $\{\lambda_0,\ldots,\lambda_{d}\}$.
        Assume first that the fixed points are distinct.  Distinct fixed points implies the multipliers are all different from $1$ and we may apply the relation \cite[Theorem 1.14]{Silverman10}
        \begin{equation*}
                \sum_{i=0}^d \frac{1}{1-\lambda_i} = 1.
        \end{equation*}
        We clear denominators and write both sides in terms of symmetric functions to get
        \begin{equation*}
            \sum_{i=0}^d (-1)^i (d+1-i)\sigma_{1,i} = \sum_{i=0}^{d+1} (-1)^i \sigma_{1,i}.
        \end{equation*}
        Combining the two sides we have the desired result when $\phi$ has distinct fixed points.

        The set of $\phi$ with $1 \in \Lambda_1$ is a Zariski closed set, so the $\phi$ with distinct fixed points are dense in $\Hom_d$.  Thus the function,
        \begin{equation*}
            \sum_{i=0}^d (-1)^i (d+1-i)\sigma_{1,i} - \sum_{i=0}^{d+1} (-1)^i \sigma_{1,i},
        \end{equation*}
        is identically zero.
    \end{proof}
    \begin{cor} \label{cor_sigma_relation}
        For $[\phi] \in P_d$ we have
        \begin{equation*}
            (-1)^{d-1}\sigma_{1,d-1} + (-1)^{d-2} 2\sigma_{1,d-2} + \cdots  - (d-1)\sigma_{1,1} +d=0.
        \end{equation*}
    \end{cor}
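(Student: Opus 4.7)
The plan is to deduce this as an immediate specialization of Theorem~\ref{thm_sigma_relation}. By definition, a polynomial map $\phi \in P_d$ has a totally ramified fixed point $P$, that is, a fixed point at which the local degree of $\phi$ equals $d$. After a conjugation (which does not affect $\boldsymbol\sigma_1$), we may take $P = \infty$ and put $\phi$ in the form $z^d + a_2 z^{d-2} + \cdots + a_d$ from \eqref{eq_poly_normal_form}.

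The first step is to compute the multiplier at $P = \infty$. In the local coordinate $w = 1/z$ centered at $\infty$, the map $\phi$ becomes $w \mapsto w^d/(1 + a_2 w^2 + \cdots + a_d w^d)$, which vanishes to order $d \geq 2$ at $w = 0$. Thus the derivative of $\phi$ (acting on the cotangent space at $P$) is $0$, so $\lambda_P(\phi) = 0$. In particular one of the $d+1$ multipliers of the fixed points of $\phi$ is zero, so the top symmetric function
\begin{equation*}
    \sigma_{1,d+1} = \prod_{i=0}^{d} \lambda_i = 0.
\end{equation*}

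The second step is then just substitution. Theorem~\ref{thm_sigma_relation} gives, for every $[\phi] \in M_d$,
\begin{equation*}
    (-1)^{d+2}\sigma_{1,d+1} + (-1)^{d-1}\sigma_{1,d-1} + (-1)^{d-2}2\sigma_{1,d-2} + \cdots - (d-1)\sigma_{1,1} + d = 0.
\end{equation*}
Setting $\sigma_{1,d+1} = 0$ on the polynomial locus $P_d$ removes the leading term and yields exactly the claimed identity.

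There is essentially no obstacle here; the only thing worth checking carefully is the claim that a totally ramified fixed point has multiplier $0$, which is the local derivative computation above, valid in any characteristic not dividing $d$ and in fact characteristic free since the leading term of the local expansion is $w^d$ with no linear part regardless.
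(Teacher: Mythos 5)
Your proof is correct and follows exactly the paper's argument: the totally ramified fixed point at $\infty$ has multiplier $0$, so $\sigma_{1,d+1} = \prod_i \lambda_i = 0$, and substituting into Theorem~\ref{thm_sigma_relation} kills the leading term. The only difference is that you spell out the local-coordinate computation of the multiplier at $\infty$, which the paper states without proof.
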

    \begin{proof}
        The fixed point at $\infty$ has $\lambda = 0$ and hence $\sigma_{1,d+1}=\prod_i \lambda_i =0$. Now use the formula from Proposition \ref{prop_sigma_relation}.
    \end{proof}

    It is known that specifying $\boldsymbol\sigma_1 \in \tilde{\tau}_{d,1}(\A^{d+1})$ determines a polynomial map in $P_d \subset M_d$ up to finitely many choices, in particular, up to $(d-2)!$ choices. We repeat these results using our algebraic methods for $d=2,\ldots,5$ as they will be needed to prove Theorem \ref{thm_poly_2_mult}.

    \begin{lem} \label{lem_poly_fixed_eq}
        Let $\phi \in P_d$ with affine fixed points $\{z_1,\ldots,z_d\}$. Each $\lambda_i \in \Lambda_1$ such that $\lambda_i \neq 1$ determines an equation of the form
        \begin{equation*}
            F_i(z_1,\ldots,z_{d})= \lambda_i-1
        \end{equation*}
        where the $F_i$ are homogeneous polynomials of degree $d-1$.
    \end{lem}
    \begin{proof}
        For a polynomial map $\phi(z)$, which we may assume is monic, we write
        \begin{equation*}
            \phi(z)-z = \prod_{i=1}^d (z-z_i).
        \end{equation*}
        If $\lambda_i=1$, then $z_i$ is a multiple root of the above equation and upon taking derivatives we get a tautology.  For $\lambda_i \neq 1$ we compute
        \begin{equation*}
            \phi'(z_i) - 1 = \lambda_i-1 = \prod_{j=1, j \neq i}^d (z_i-z_j).
        \end{equation*}
        Thus, we get
        \begin{equation*}
            F_i(z_1,\ldots,z_d) = \prod_{j=1, j \neq i}^d (z_i-z_j) = \lambda_i-1,
        \end{equation*}
        where $F_i(z_1,\ldots,z_d)$ is a homogeneous equation of degree $d-1$.
    \end{proof}
	Each equation in Lemma \ref{lem_poly_fixed_eq} defines a hypersurface and for all but finitely many choices the $\lambda_i$, these hypersurfaces intersect properly. Additionally, we have a $(d+1)$-st hypersurface $\sigma_{d+1} =0$, since $\phi$ is a polynomial map. Thus, we recover the result that McMullen's map is already is finite-to-one from $\Lambda_1$. However, this does not provide the degree. We can compute the degree using Groebner basis for the system of equations from Lemma \ref{lem_poly_fixed_eq} for small $d$. Recall from \cite{Fujimura, Sugiyama}
        \begin{equation*}
            \deg(\tilde{\tau}_{d,1}) = \begin{cases}
              1 & d=2\\
              1 & d=3\\
              2 & d=4\\
              6 & d=5.
            \end{cases}
        \end{equation*}

    This says that $\Lambda_1$ specifies a polynomial up to finitely many choices.  The next example shows that $\Lambda_2$ can further distinguish between polynomials.
    \begin{exmp} \label{exmp_poly}
        Consider the map $\tilde{\tau}_{4,1}(P_4) \to \A^5$ and the fiber $$\tilde{\tau}_{4,1}^{-1}(-1724,-1163982,74470803,4530821869,0).$$  In other words $\Lambda_1 = \{-2243,-59,0,67,511\}$.  There are two polynomials (up to conjugation) in this inverse image
        \begin{align*}
            f(z) &= z^4 - 77z^2 + 217z - 140\\
            g(z) &= z^4 - 721/8z^2 + 217z + 165025/256.
        \end{align*}
        However,
        \begin{equation*}
            \tilde{\tau}_{4,2}(f) \neq \tilde{\tau}_{4,2}(g).
        \end{equation*}

\begin{code}
\begin{verbatim}
Pari/gp code to compute the 2-muliplier spectrum of f and g

f(x)= x^4 - 77*x^2 + 217*x - 140
g(x)= x^4 - 721/8*x^2 + 217*x + 165025/256
Rf=polroots(f(f(x))-x);
Rg=polroots(g(g(x))-x);
{
  for(i=1,16,
   print(subst(deriv(f(f(x)),x),x,Rf[i]));
  );
print("---------------");
  for(i=1,16,
   print(subst(deriv(g(g(x)),x),x,Rg[i]));
  );
}
\end{verbatim}
\end{code}
    \end{exmp}
    \begin{prop} \label{prop_42}
        $\deg(\tilde{\tau_4}) = \deg(\tilde{\tau}_{4,2})=1$.
    \end{prop}
    \begin{proof}
        We use a Groebner basis calculation in Magma \cite{magma}.  We add to the fixed point equations, the equations for a single $2$-periodic point
        \begin{align*}
            \phi(\phi(\beta)) = \beta \qquad \text{and} \qquad (\phi^2)'(\beta)=\lambda_{\beta}.
        \end{align*}
        From the fixed point equations there were 6 distinct choices of the set of fixed points  with only 2 up to conjugation.  For each set of fixed points there are $16$ possible $2$-periodic points, and, hence, there are at least $96$ points on this variety.  Specializing to $\Lambda_1 = \{-4,6,-3,-1/3,0\}$ and using Magma we have a zero-dimensional scheme in the coordinates $(\beta,\lambda_{\beta})$ which is reduced with $96$ distinct points.  Working modulo $13$, where the scheme is still reduced we determine the $96$ points over the algebraic closure of $\F_{13}$ and that there are $2$ distinct possibilities for $\Lambda_2$.  Thus, there is one polynomial map associated to $\{\Lambda_1,\Lambda_2\}$. Since these points are all multiplicity one, there will remain $2$ distinct maps under perturbation of $\Lambda_1$.

\begin{code}
\begin{verbatim}
//Magma code:

R<z1,z2,z3,B,l4>:=AffineSpace(GF(13),5);

function f(x)
  return(x + x^4 + (-z1^2 + (-z2 - z3)*z1 + (-z2^2 - z3*z2 - z3^2))*x^2 +
  ((z2 + z3)*z1^2 + (z2^2 + 2*z3*z2 + z3^2)*z1 + (z3*z2^2 + z3^2*z2))*x +
  (-z3*z2*z1^2 + (-z3*z2^2 - z3^2*z2)*z1));
end function;

//helper function
function W(i,S,T)
  if S[i] eq 0 then
    return(0);
  else
    return(S[i]/T[i]);
  end if;
end function;

//find the points over the splitting field
l1:=-4;
l2:=6;
l3:=-3;
f1:=Evaluate(Derivative(f(B),B),B,z1) - l1-1;
f2:=Evaluate(Derivative(f(B),B),B,z2) - l2-1;
f3:=Evaluate(Derivative(f(B),B),B,z3) - l3-1;
f4:=f(f(B))-B;
f5:=Derivative(f(f(B)),B)- l4;

C:=Scheme(R,[f1,f2,f3,f4,f5]);
IsReduced(C);
Degree(C);
Pc:=PointsOverSplittingField(C);

//check how many sets of points are distinct.
Z:=[];
for i:=1 to 6 do
A:={};
good:=1;
  for j:=1 to 16 do
    A:=Include(A,Pc[(i-1)*16+j]);
  end for;
  for k:=1 to #Z do
    if Z[k] eq A then good:=0; end if;
  end for;
  if good eq 1 then  Z:=Append(Z,A); end if;
end for;
#Z;

//check to see which of the distinct sets of points differ by a
//third root of unity
numdistinct:=0;
notdone:=[1..#Z];
for a:=1 to #Z do
  if (a in notdone) eq true then
    numdistinct:=numdistinct+1;
    for b:=a+1 to #Z do
      match:=0;
      r:=[];
      T:=[[P[1]^3,P[2]^3,P[3]^3,P[4]^3,P[5]^3] : P in Z[a]];
      S:=[[Q[1]^3,Q[2]^3,Q[3]^3,Q[4]^3,Q[5]^3] : Q in Z[b]];
      TT:=[[P[1],P[2],P[3],P[4],P[5]] : P in Z[a]];
      SS:=[[Q[1],Q[2],Q[3],Q[4],Q[5]] : Q in Z[b]];
      match:=1;
      for j:=1 to 16 do
        found:=0;i:=1;
        while (found eq 0) and (i le 16) do
          if S[j] eq T[i] then
            if r eq [] then
               r := [W(1,TT[i],SS[j]),W(2,TT[i],SS[j]),W(3,TT[i],SS[j]),
                    W(4,TT[i],SS[j]),W(5,TT[i],SS[j])];
               found:=1;
            else
              good:=1;
              for k:=1 to 5 do
                if SS[j][k] ne 0 then
                  if r[k] ne W(k,TT[i],SS[j]) then
                    good:=0;
                  end if;
                end if;
              end for;
              if good eq 1 then found:=1; end if;
            end if;
          end if;
          i:=i+1;
        end while;
        if found ne 1 then match:=0; end if;
      end for;
      if match eq 1 then
        notdone:=Exclude(notdone,b);
      end if;
    end for;
  end if;
end for;
print numdistinct;
\end{verbatim}
\end{code}
    \end{proof}
    \begin{prop} \label{prop_52}
        $\deg(\tilde{\tau_5}) = \deg(\tilde{\tau}_{5,2})=1$.
    \end{prop}
    \begin{proof}
        We again use a Groebner basis calculation in Magma \cite{magma}.  We add to the fixed point equations, the equations for a single $2$-periodic point
        \begin{align*}
            \phi(\phi(\beta)) = \beta \qquad \text{and} \qquad (\phi^2)'(\beta)=\lambda_{\beta}.
        \end{align*}
        From the fixed point equations there were $24$ distinct choices of the set of fixed points with 6 up to conjugation.  For each set of fixed points there are $25$ possible $2$-periodic points, and, hence, there are at least $600$ points on this variety.  Specializing to $\Lambda_1 = \{6,-4,5,3,3/7,0\}$ and using Magma we have a zero-dimensional scheme in the coordinates $(\beta,\lambda_{\beta})$.  Working modulo $29$  the scheme is reduced with $600$ distinct points over the algebraic closure of $\F_{29}$.  There are $6$ distinct possibilities for $\Lambda_2$.  Thus, there is one polynomial map associated to $\{\Lambda_1,\Lambda_2\}$. Since these points are all multiplicity one, there will remain $6$ distinct maps under perturbation of $\Lambda_1$.

\begin{code}
\begin{verbatim}
//Magma Code:

R<z1,z2,z3,z4,B,l5>:=AffineSpace(GF(29),6);

function f(x)
  return(x+x^5 + (-z1^2 + (-z2 + (-z3 - z4))*z1 + (-z2^2 + (-z3 - z4)*z2 +
  (-z3^2 - z4*z3 - z4^2)))*x^3 + ((z2 + (z3 + z4))*z1^2 + (z2^2 + (2*z3 + 2*z4)*z2 +
  (z3^2 + 2*z4*z3 + z4^2))*z1+ ((z3 + z4)*z2^2 + (z3^2 + 2*z4*z3 + z4^2)*z2 +
  (z4*z3^2 + z4^2*z3)))*x^2 + (((-z3 - z4)*z2 - z4*z3)*z1^2 + ((-z3 - z4)*z2^2 +
  (-z3^2 - 3*z4*z3 - z4^2)*z2 + (-z4*z3^2 - z4^2*z3))*z1 +(-z4*z3*z2^2 +
  (-z4*z3^2 - z4^2*z3)*z2))*x + (z4*z3*z2*z1^2 +
  (z4*z3*z2^2 + (z4*z3^2+ z4^2*z3)*z2)*z1));
end function;

//helper function
function W(i,S,T)
  if S[i] eq 0 then
    return(0);
  else
    return(S[i]/T[i]);
  end if;
end function;

//find the points over the splitting field
l1:=6;
l2:=-4;
l3:=5;
l4:=3;
f1:=Evaluate(Derivative(f(B),B),B,z1) - l1-1;
f2:=Evaluate(Derivative(f(B),B),B,z2) - l2-1;
f3:=Evaluate(Derivative(f(B),B),B,z3) - l3-1;
f4:=Evaluate(Derivative(f(B),B),B,z4) - l4-1;

f5:=f(f(B))-B;
f6:=Derivative(f(B),B)- l5;

C:=Scheme(R,[f1,f2,f3,f4,f5,f6]);
IsReduced(C);
Degree(C);
Pc:=PointsOverSplittingField(C);

//check how many sets of points are distinct.
Z:=[];
for i:=1 to 24 do
A:={};
good:=1;
  for j:=1 to 25 do
    A:=Include(A,Pc[(i-1)*25+j]);
  end for;
  for k:=1 to #Z do
    if Z[k] eq A then good:=0; end if;
  end for;
  if good eq 1 then  Z:=Append(Z,A); end if;
end for;
#Z;

//check to see which of the distinct sets of points differ by a
//fourth root of unity
numdistinct:=0;
notdone:=[1..#Z];
for a:=1 to #Z do
  if (a in notdone) eq true then
    numdistinct:=numdistinct+1;
    for b:=a+1 to #Z do
      match:=0;
      r:=[];
      T:=[[P[1]^4,P[2]^4,P[3]^4,P[4]^4,P[5]^4,P[6]^4] : P in Z[a]];
      S:=[[Q[1]^4,Q[2]^4,Q[3]^4,Q[4]^4,Q[5]^4,Q[6]^4] : Q in Z[b]];
      TT:=[[P[1],P[2],P[3],P[4],P[5],P[6]] : P in Z[a]];
      SS:=[[Q[1],Q[2],Q[3],Q[4],Q[5],Q[6]] : Q in Z[b]];
      match:=1;
      for j:=1 to 25 do
        found:=0;i:=1;
        while (found eq 0) and (i le 25) do
          if S[j] eq T[i] then
            if r eq [] then
               r := [W(1,TT[i],SS[j]),W(2,TT[i],SS[j]),W(3,TT[i],SS[j]),
                    W(4,TT[i],SS[j]),W(5,TT[i],SS[j]),W(6,TT[i],SS[j])];
               found:=1;
            else
              good:=1;
              for k:=1 to 6 do
                if SS[j][k] ne 0 then
                  if r[k] ne W(k,TT[i],SS[j]) then
                    good:=0;
                  end if;
                end if;
              end for;
              if good eq 1 then found:=1; end if;
            end if;
          end if;
          i:=i+1;
        end while;
        if found ne 1 then match:=0; end if;
      end for;
      if match eq 1 then
        notdone:=Exclude(notdone,b);
      end if;
    end for;
  end if;
end for;
print numdistinct;
\end{verbatim}
\end{code}
    \end{proof}

	Combining the two previous propositions proves Theorem \ref{thm_poly_2_mult}.
	
	Note that we are computing degree as the number of points in a generic inverse image. The following, an example commuincated by Adam Epstein, is one such special fiber for $d=4$ where the correspondence is not one-to-one.
	\begin{exmp}
		Define
		\begin{equation*}
			f_{a,b} = (z^2+a)^2 + b^2
		\end{equation*}
		Then
		\begin{equation*}
			\Lambda_n(f_{a,b}) = \Lambda_n(f_{b,a}) \qquad \text{for all } n.
		\end{equation*}
	\end{exmp}

\subsection{Explicit $P_3^1$}
    Milnor \cite{Milnor} gave an explicit normal form for classes in $M_2$ in terms of $\Lambda_1$.  We give a similar description for $P_3$. In particular, we give an explicit description of the fiber $\tilde{\tau}_{3,1}^{-1}(\boldsymbol\sigma_1) \subset P_3$.
    \begin{prop}
        Let $[\phi] \in  \tilde{\tau}_{3,1}^{-1}(\boldsymbol\sigma_1)$ and write $[\phi]$ in the form $\phi(z)= z^3 + az+b$. If $\phi$ has $3$ distinct affine fixed points with multipliers $\lambda_1$, $\lambda_2$, and $\lambda_3$.  Then,
        \begin{align*}
            a&=-\frac{(\lambda_1^2 + (\lambda_2 - 6)\lambda_1 + (\lambda_2^2 - 6\lambda_2 + 9))}{(3\lambda_1 + (3\lambda_2 - 6))}\\
            27b^2 &= \sigma_{1,3} - \sigma_{1,2} a + \sigma_{1,1} a^2 - a^3.
        \end{align*}

        If $\phi$ has $2$ distinct affine fixed points with $\lambda$ the multiplier that is not $1$. Then,
        \begin{align*}
            a &=1-\frac{\lambda -1}{3}\\
            b &= -2\left(\pm \frac{\lambda -1}{9}\right)^{3/2}.
        \end{align*}

        If $\phi$ has a single affine fixed point, then
        \begin{equation*}
            a=b=0.
        \end{equation*}
    \end{prop}
    \begin{proof}
        Specifying $\boldsymbol\sigma_1$ specifies the $1$-multiplier spectrum $\Lambda_1 = \{\lambda_1,\lambda_2,\lambda_3,0\}$.
        \begin{case}[$3$ distinct affine fixed points]
            For the three affine fixed points, we know
            \[ z_i = \pm \sqrt{\frac{\lambda_i-a}{3}}.\]
            Additionally, we have
            \begin{equation*}
                z^3 + (a-1)z + b = \prod_{i=1}^3 (z-z_i)
            \end{equation*}
            where
            \begin{align*}
                b &= -\prod_{i=1}^3 z_i = - \prod_{i=1}^3 \pm \sqrt{\frac{\lambda_i-a}{3}}\\
                (a-1) &= z_1z_2 + z_1z_3 + z_2z_3\\
                0 &= z_1+z_2+z_3 = \sum_{i=1}^3 \pm \sqrt{\frac{\lambda_i-a}{3}}.
            \end{align*}
            Solving for $a$ and $b$, we obtain
            \begin{align*}
                a&=-\frac{(\lambda_1^2 + (\lambda_2 - 6)\lambda_1 + (\lambda_2^2 - 6\lambda_2 + 9))}{(3\lambda_1 + (3\lambda_2 - 6))}\\
                27b^2 &= \sigma_{1,3} - \sigma_{1,2} a + \sigma_{1,1} a^2 - a^3.
            \end{align*}
        \end{case}
        \begin{case}[$2$ distinct affine fixed points]
            Let $\lambda_3=\lambda$ be the multiplier that is not equal to $1$, then since $z_1+z_2+z_3=0$ and $z_1=z_2$ we know
            \begin{equation*}
                z_3 = -2z_1
            \end{equation*}
            and
            \[\phi(z) -z = z^3 + (a-1)z + b = (z-z_1)^2(z+2z_1).\]
            Thus, we have
            \begin{align*}
                a &= 1-3z_1^2\\
                b&=2z_1^3\\
                \lambda&=3z_3^2 + a = 12z_1^2 + a = 9z_1^2+1.
            \end{align*}
            Thus, we solve
            \[ z_1 = \pm \sqrt{\frac{\lambda -1}{9}}\]
            and
            \begin{align*}
                a &=1-\frac{\lambda -1}{3}\\
                b &= \pm 2\left(\frac{\lambda -1}{9}\right)^{3/2}.
            \end{align*}
        \end{case}
        \begin{case}[$1$ distinct affine fixed point]
            If $\lambda_i = 1$ for $i=1,2,3$, then $z_1=z_2=z_3$ for the three affine fixed points since they must all have multiplicity at least $2$.  Since $z_1+z_2+z_3=0$, we must have $z_i=0$ and, hence, $\phi(z) = z^3$.
        \end{case}
    \end{proof}
    Similar to the computation that $\phi_c(z) = z^2+c$ is the family $\boldsymbol \sigma_1 = (2,4c,0)$ in $M_2^1$, we can compute the image of degree $3$ polynomials.
    \begin{cor}
        We may write $[\phi] \in P_3$ as
        \begin{equation*}
            \phi_{a,b}(z) = z^3 + az + b.
        \end{equation*}
        up to the sign of $b$.  In particular,
        \begin{equation*}
            [\phi_{a,b}] = [\phi_{a,-b}].
        \end{equation*}
    \end{cor}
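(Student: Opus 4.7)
The plan is to combine two observations. First, from the polynomial normal form \eqref{eq_poly_normal_form} specialized to $d=3$, every class $[\phi] \in P_3$ admits a representative of the shape $z^3 + az + b$, and the preceding theorem shows that once $\boldsymbol\sigma_1$ is fixed, the coefficient $a$ is determined (as a rational function of the $\lambda_i$) while $b$ is determined only up to sign in each case (either $27b^2$ is expressed as a polynomial in $a$ and $\boldsymbol\sigma_1$, or $b = \pm 2((\lambda-1)/9)^{3/2}$, or $b=0$). So the content of the corollary is really the second statement: that the two sign choices yield conjugate maps.

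For that, I would exhibit an explicit element of $\PGL_2$ realizing the conjugation. The natural candidate is $\psi(z) = -z$, which fixes $\infty$ (preserving the polynomial condition). A direct calculation gives
\begin{equation*}
(\psi^{-1} \circ \phi_{a,b} \circ \psi)(z) \;=\; -\bigl((-z)^3 + a(-z) + b\bigr) \;=\; z^3 + az - b \;=\; \phi_{a,-b}(z),
\end{equation*}
so $[\phi_{a,b}] = [\phi_{a,-b}]$ in $M_3$.

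This yields the corollary: writing $[\phi] \in P_3$ as $\phi_{a,b}$ via \eqref{eq_poly_normal_form}, the sign ambiguity in $b$ coming from the theorem's formulas corresponds exactly to the conjugation $z \mapsto -z$, so the representative is well defined up to that sign. There is no real obstacle here; the only thing to double-check is that $\psi(z)=-z$ is the only ambiguity, i.e.\ that no other $\PGL_2$ element fixing $\infty$ and preserving the normalized form $z^3 + az + b$ produces a further identification beyond $b \mapsto \pm b$. An element fixing $\infty$ has the form $z \mapsto \alpha z + \beta$; requiring the conjugate to again have leading coefficient $1$ and no $z^2$ term forces $\alpha^2 = 1$ and $\beta = 0$, leaving only $\psi(z) = \pm z$, which gives precisely the sign flip on $b$ (and the identity).
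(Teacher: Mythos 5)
Your proof is correct and matches the paper's (implicit) argument: the paper states this corollary without a written proof, relying on the normal form $z^d + a_2 z^{d-2} + \cdots + a_d$ and the observation, already made in the proof of Theorem \ref{thm_poly_degrees}, that conjugating by a $(d-1)\sst$ root of unity (here $z \mapsto -z$) preserves the form and flips the sign of $b$. Your explicit computation of $\psi^{-1}\circ\phi_{a,b}\circ\psi$ and the check that $z\mapsto \pm z$ are the only affine conjugations preserving the normal form supply exactly the details the paper omits.
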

    \begin{prop}
        The image of $[\phi_{a,b}]$ under $\tau_{3,1}$ is given by
        \begin{equation*}
            \tau_{3,1}([\phi_{a,b}]) = \boldsymbol \sigma_1 = (6-3a,9-6a,9a-12a^2 + 4a^3 + 27b^2,0)
        \end{equation*}
    \end{prop}
    \begin{proof}
        Direct computation was performed with Mathematica \cite{Mathematica7}.
\begin{code}

\begin{verbatim}
Mathematica code:

f[x_] := x^3 + a*x + b;
df[x_] := D[f[x],x]
T := Solve[{f[x] == x}, {x}]
L1 := {};
For[i = 1, i < 4, i++, AppendTo[L1, df[T[[i, 1, 2]]]]]
For[i = 1, i < 4, i++,
Print[Simplify[Expand[SymmetricPolynomial[i, L1]]]]]
\end{verbatim}
\end{code}
\end{proof}

\section{Rational Maps} \label{sect_degree3}
	We now prove Theorem \ref{thm_32}.
    \begin{proof}
        The proof proceeds in two steps.  First we determine a zero-dimensional variety whose points give the coefficients of the map.  Then we determine the number of points on this variety.

        Recall that $\deg(\tau_3) = \#(\tau_3^{-1}(P))$ for a generic point $P$.  Generically, there are $4$ distinct fixed points  ($\lambda_i = 1$ is a closed condition) and at least one 2-periodic point which is not also a fixed point ($\lambda_i \neq -1$).

        We dehomogenize and write $\phi$ as a rational map denoted as $\bar{\phi}(z)$
        We label the coefficients of the general such map as
        \begin{equation*}
            \bar{\phi}(z) = \frac{a_1z^3 + a_2z^2 + a_3z + a_4}{b_1z^3 + b_2z^2 + b_3z + b_4}.
        \end{equation*}
        Under a $\PGL_2$ transformation we may move two of the fixed points to $0$ and $\infty$ to obtain
        \begin{equation*}
            \bar{\phi}(z) = \frac{a_1z^3 + a_2z^2 + a_3z}{b_2z^2 + b_3z + b_4}.
        \end{equation*}
        Computing the multipliers we have
        \begin{equation*}
            \lambda_0 b_4 = a_3 \qquad \lambda_{\infty} a_1 = b_2.
        \end{equation*}
        We are left to determine the coefficients $\{a_1,a_2,b_3,b_4\}$.  A $\PGL_2$ transformation allows us to move a third fixed point to $1$.  Then we have
        \begin{equation*}
            a_2 = (b_2 + b_3 + b_4) - a_1 - a_3.
        \end{equation*}
        Looking at the multiplier at $z=1$ we can solve for
        \begin{equation*}
            b_3=\frac{(1-\lambda_1\lambda_{\infty})a_1 + (2-\lambda_0-\lambda_1)b_4}{\lambda_1-1},
        \end{equation*}
        since $\lambda_1 \neq 1$.  Letting $\alpha$ be the fourth and last fixed point.  Then we can solve for
        \begin{equation*}
            b_4 = \frac{(\alpha a_1 \lambda_{\infty} - \alpha a_1)}{1-\lambda_0}.
        \end{equation*}
        This provides all the coefficients of $\overline{\phi}(z)$ in terms of $\{\lambda_0,\lambda_1,\lambda_{\infty},\alpha\}$ except for $a_1$.  To account for $a_1$ we may either consider $\phi(z)$ as a map on $\P^1$ and set $a_1=1$ or, equivalently, notice that $a_1$ is a factor of each of the coefficients and cancels in $\overline{\phi}(z)$.  Also, note that $\lambda_{\alpha}$ is uniquely determined by the relation \cite[Theorem 1.14]{Silverman10}
        \begin{equation*}
            \frac{1}{1-\lambda_1} + \frac{1}{1-\lambda_0} + \frac{1}{1-\lambda_{\infty}} + \frac{1}{1-\lambda_{\alpha}} = 1.
        \end{equation*}

        Let $\beta \not\in \{0,1,\infty,\alpha\}$ be an exact 2-periodic point. We have new equations
        \begin{align}
            \bar{\phi}^2(\beta) &= \beta \label{eq3}\\
            \lambda_{\beta} = (\bar{\phi}^2)'(\beta) &= \bar{\phi}'(\beta)\bar{\phi}'(\bar{\phi}(\beta)). \notag
        \end{align}
        Given $\lambda_{\beta}$, the system (\ref{eq3}) has 2 variables $\{\alpha,\beta\}$ and 2 equations.  Except for possibly finitely many choices of $\lambda_{\beta}$, this system defines a $0$-dimensional variety in coordinates $(\alpha,\beta)$.  Thus, for a generic point $P$, the fiber $\tau_{3,2}^{-1}(P)$ is finite-to-one.

        We now determine $\deg(\tau_{3,2})$.  Computations were done in Magma \cite{magma}.   A generic Groebner basis computation does not finish.  Choosing a particular specialization (values of the multipliers) and computing the degree of the reduced subscheme, we find that it has $18$ distinct points in $\P^2$.  We will show there remain 18 distinct points under perturbation of the multipliers.

        We find $6$ points where the map is not generic, where $\alpha=0$ or $1$ is a fixed point with multiplicity greater than one.  In coordinates $(\alpha,\beta,z)$, where $z$ is the homogenizing variable, the 6 points are
        \begin{align*}
            P_1&=(1,0,0)\\
            P_2&=(0,0,1)\\
            P_3&=(1,1,1)\\
            P_4&=(0,\frac{-\lambda_1 - \lambda_{\infty} + 2}{1-\lambda_1},1)\\
            P_5&=(1,\frac{\lambda_{\infty} - 1}{1-\lambda_0},1)\\
            P_6&=(1,-\frac{\lambda_0\lambda_1\lambda_{\infty} - \lambda_0\lambda_1 - \lambda_{\infty} + 1}{\lambda_0\lambda_1 - \lambda_0 - \lambda_1 + 1},0).
        \end{align*}
        By computing the determinant of the Jacobian matrix for $P_4$, $P_5$, and $P_6$ we see that they have generic multiplicity of at least $2$.  We next compute that $P_1$, $P_2$, and $P_3$ each have generic multiplicity $42$.  The method is to pick a specialization where this occurs, and then show that the multiplicities remain constant under perturbation of the multipliers.  Magma is able to compute the multiplicities with 3 of the 4 multipliers fixed, demonstrating that the generic multiplicities are each $42$.  Since the total number of points of intersection (with multiplicities) by B\'ezout's theorem is $144$, we see that the other twelve points must have multiplicity $1$ and $P_4$, $P_5$, and $P_6$ have multiplicity exactly $2$.  Thus, these twelve points also remain distinct under perturbation of the multipliers.

        The points $P_1,\ldots,P_6$ do satisfy the necessary equations, but they correspond to non-generic maps, where $\alpha = 0$ or $1$, causing the fixed points to not be distinct.  It is easy to check that these six points are all of the possibilities for $\alpha, \beta \in \{0,1\}$.  Similarly, if $(\alpha, \beta)$ and $(\alpha, \beta')$ corresponded to the same map, then we would have $\lambda_{\beta} = \lambda_{\beta'}$ which is a non-generic situation.  Thus, the remaining twelve points are inverse images under $\tau_{3,2}$ and the degree of $\tau_{3,2}$ is $12$.

\begin{code}
\begin{verbatim}
\\Pari/gp code
\\defining the map f(x) in pari to get the systems of equations
la=1+1/(1/(1-l8) + 1/(1-l1) + 1/(1-l0)-1);
b2=l8*a1;
a3=l0*b4; \\multiplier of 0
a2=(b2 + b3 + b4) - a1 - a3; \\1 is fixed
b3=((1-l1*l8)*a1 + (2-l0-l1)*b4)/(l1-1); \\multiplier of 1
b4= (a*a1*l8 - a*a1)/(1-l0)
\\multiplier of a is determined uniquely.
f(x) =  (a1*x^3 + a2*x^2 + a3*x)/(b2*x^2 + b3*x + b4)

//---------------------------------------------------------
//use the definition of f(x) from pari to determine the scheme
//Magma code:

S<l0,l1,l8,lB>:=FunctionField(Rationals(),4);
R<a,B,z>:=ProjectiveSpace(S,2);

function h(P,d)
  Q:=0;
   for i:=0 to d do
     for j:=0 to d-i do
       Q:=Q+ Term(Term(P,a,i),B,j)*z^(d-i-j);
     end for;
   end for;
return(Q);
end function;

function f(x,y)
  return((((l0 - 1)*l1 + (-l0 + 1))*x^3 + ((a*l0*l1 + (-l0 + (-a + 1)))*l8 +
  (((-a - 1)*l0 +1)*l1 + (2*l0 + (a - 2))))*x^2*y + ((-a*l0*l1 + a*l0)*l8 +
  (a*l0*l1 - a*l0))*x*y^2));
end function;
function g(x,y)
 return((((l0 - 1)*l1 + (-l0 + 1))*l8*x^2*y + (((-l0 + (a + 1))*l1 +
 (a*l0 - 2*a))*l8 + (-a*l1 + ((-a + 1)*l0 +(2*a - 1))))*x*y^2 +
 ((-a*l1 + a)*l8 + (a*l1 - a))*y^3));
end function;

f1:=f(f(B,1),g(B,1));
g1:=g(f(B,1),g(B,1));

F1:=f1-B*g1;
F2:=g1*Derivative(f1,B) - f1*Derivative(g1,B) - lB*g1*g1;

G1:=h(F1,9);
G2:=h(F2,16);

C:=Scheme(R,[G1,G2]);

//----------------------------------
//Determining the 6 generic high multiplicity points

Factorization(GCD(Evaluate(G1,a,0),Evaluate(G2,a,0)));
Factorization(GCD(Evaluate(G1,B,0),Evaluate(G2,B,0)));
Factorization(GCD(Evaluate(Evaluate(G1,a,0),z,1),Evaluate(Evaluate(G2,a,0),z,1)));
Factorization(GCD(Evaluate(Evaluate(G1,a,0),z,0),Evaluate(Evaluate(G2,a,0),z,0)));
Factorization(GCD(Evaluate(G1,z,0),Evaluate(G2,z,0)));
Factorization(GCD(Evaluate(Evaluate(G1,a,1),z,0),Evaluate(Evaluate(G2,a,1),z,0)));
Factorization(GCD(Evaluate(Evaluate(G1,a,1),z,1),Evaluate(Evaluate(G2,a,1),z,1)));
Factorization(GCD(Evaluate(Evaluate(G1,B,1),z,1),Evaluate(Evaluate(G2,B,1),z,1)));
//---------------------------------------------

P1:=C![1,0,0];
P2:=C![0,0,1];
P3:=C![1,1,1];
P4:=C![0,1/(1-l1)*(-l1 - l8 + 2),1];
P5:=C![1,1/(1-l0)*(l8 - 1),1];
P6:=C![1,-(l0*l1*l8 - l0*l1 - l8 + 1)/(l0*l1 - l0 - l1 + 1),0];

//-----------------
//Compute the Jacobian determinant to show that P4,P5,P6 all have
//multiplicity at least 2 generically.

Evaluate(Evaluate(Evaluate(Derivative(F1,a),a,P4[1]),B,P4[2]),z,P4[3])*
Evaluate(Evaluate(Evaluate(Derivative(F2,B),a,P4[1]),B,P4[2]),z,P4[3])-
Evaluate(Evaluate(Evaluate(Derivative(F1,B),a,P4[1]),B,P4[2]),z,P4[3])*
Evaluate(Evaluate(Evaluate(Derivative(F2,a),a,P4[1]),B,P4[2]),z,P4[3]);

Evaluate(Evaluate(Evaluate(Derivative(F1,a),a,P5[1]),B,P5[2]),z,P5[3])*
Evaluate(Evaluate(Evaluate(Derivative(F2,B),a,P5[1]),B,P5[2]),z,P5[3])-
Evaluate(Evaluate(Evaluate(Derivative(F1,B),a,P5[1]),B,P5[2]),z,P5[3])*
Evaluate(Evaluate(Evaluate(Derivative(F2,a),a,P5[1]),B,P5[2]),z,P5[3]);

Evaluate(Evaluate(Evaluate(Derivative(F1,B),a,P6[1]),B,P6[2]),z,P6[3])*
Evaluate(Evaluate(Evaluate(Derivative(F2,z),a,P6[1]),B,P6[2]),z,P6[3])-
Evaluate(Evaluate(Evaluate(Derivative(F1,z),a,P6[1]),B,P6[2]),z,P6[3])*
Evaluate(Evaluate(Evaluate(Derivative(F2,B),a,P6[1]),B,P6[2]),z,P6[3]);


//---------------------------------------
l0:=3;
l1:=2;
l8:=4;
lB:=-5;

R<a,B,z>:=ProjectiveSpace(GF(101),2);

function h(P,d)
  Q:=0;
   for i:=0 to d do
     for j:=0 to d-i do
       Q:=Q+ Term(Term(P,a,i),B,j)*z^(d-i-j);
     end for;
   end for;
return(Q);
end function;

function f(x,y)
  return((((l0 - 1)*l1 + (-l0 + 1))*x^3 + ((a*l0*l1 + (-l0 + (-a + 1)))*l8 +
  (((-a - 1)*l0 +1)*l1 + (2*l0 + (a - 2))))*x^2*y + ((-a*l0*l1 + a*l0)*l8 +
  (a*l0*l1 - a*l0))*x*y^2));
end function;
function g(x,y)
 return((((l0 - 1)*l1 + (-l0 + 1))*l8*x^2*y + (((-l0 + (a + 1))*l1 +
 (a*l0 - 2*a))*l8 + (-a*l1 + ((-a + 1)*l0 +(2*a - 1))))*x*y^2 +
 ((-a*l1 + a)*l8 + (a*l1 - a))*y^3));
end function;

f1:=f(f(B,1),g(B,1));
g1:=g(f(B,1),g(B,1));

F1:=f1-B*g1;
F2:=g1*Derivative(f1,B) - f1*Derivative(g1,B) - lB*g1*g1;

G1:=h(F1,9);
G2:=h(F2,16);

C:=Scheme(R,[G1,G2]);
D:=ReducedSubscheme(C);
Degree(D);

//-----------------------------------------------
//Compute the multiplicities letting each multiplier be a parameter

//l0:=3;
l1:=2;
l8:=4;
lB:=-5;

S<l0>:=FunctionField(Rationals(),1);
R<a,B,z>:=ProjectiveSpace(S,2);

function h(P,d)
  Q:=0;
   for i:=0 to d do
     for j:=0 to d-i do
       Q:=Q+ Term(Term(P,a,i),B,j)*z^(d-i-j);
     end for;
   end for;
return(Q);
end function;

function f(x,y)
  return((((l0 - 1)*l1 + (-l0 + 1))*x^3 + ((a*l0*l1 + (-l0 + (-a + 1)))*l8 +
  (((-a - 1)*l0 +1)*l1 + (2*l0 + (a - 2))))*x^2*y + ((-a*l0*l1 + a*l0)*l8 +
  (a*l0*l1 - a*l0))*x*y^2));
end function;
function g(x,y)
 return((((l0 - 1)*l1 + (-l0 + 1))*l8*x^2*y + (((-l0 + (a + 1))*l1 +
 (a*l0 - 2*a))*l8 + (-a*l1 + ((-a + 1)*l0 +(2*a - 1))))*x*y^2 +
 ((-a*l1 + a)*l8 + (a*l1 - a))*y^3));
end function;

f1:=f(f(B,1),g(B,1));
g1:=g(f(B,1),g(B,1));

F1:=f1-B*g1;
F2:=g1*Derivative(f1,B) - f1*Derivative(g1,B) - lB*g1*g1;

G1:=h(F1,9);
G2:=h(F2,16);

C:=Scheme(R,[G1,G2]);
P1:=C![1,0,0];
P2:=C![0,0,1];
P3:=C![1,1,1];
P4:=C![0,1/(1-l1)*(-l1 - l8 + 2),1];
P5:=C![1,1/(1-l0)*(l8 - 1),1];
P6:=C![1,-(l0*l1*l8 - l0*l1 - l8 + 1)/(l0*l1 - l0 - l1 + 1),0];

Multiplicity(P1);
Multiplicity(P2);
Multiplicity(P3);
\end{verbatim}
\end{code}
    \end{proof}

\section{Multipliers and Fixed Points} \label{sec_mult_fixed}
     McMullen's theorem tells us that there are only finitely many conjugacy classes with a given set of multiplier spectra.  An interesting question would be to determine what information is necessary to specify a class uniquely in $M_d$.  In the case of distinct fixed points, we prove Theorem \ref{thm_3}.

    \begin{proof}
        We write
        \begin{equation*}
            \phi(z) =  z-  \frac{p(z)}{q(z)}.
        \end{equation*}
        The fixed points are the roots of the polynomial $p(z)$ so it can be specified (up to constant).

        We have
        \begin{equation*}
            \phi'(z) = 1- \frac{p'(z)}{q(z)} - \frac{p(z)q'(z)}{q(z)^2}
        \end{equation*}
        Evaluated at a fixed point is
        \begin{equation*}
            \lambda_i = \phi'(z_i) = 1- \frac{p'(z_i)}{q(z_i)} - 0.
        \end{equation*}
        Thus we get the linear equation
        \begin{equation*}
            (1-\lambda_i)q(z_i) - p'(z_i)=0.
        \end{equation*}
        Note that $\deg(q(z)) = d$ so has $d+1$ coefficients to go along with the $d+1$ fixed points. Finding the coefficients of $q(z)$ when $\lambda_i \neq 1$ is then a question of whether or not the matrix of coefficients of the linear system is invertible.  This matrix is a Vandermonde with each row scaled by the nonzero constant $(1- \lambda_i)$.  Thus, since the fixed points are distinct, the matrix is invertible and there is a unique solution.

        If there is a fixed point at infinity, then the argument is the same except that $\deg(p(z))$ is one less.
    \end{proof}


\subsection{Normal Form for Degree 3}
    We propose the following normal form for a rational map of degree $3$, with distinct fixed points.
    \begin{thm}
        {\footnotesize
        \begin{align*}
            \bar{\phi}(z) &= \frac{((-\lambda_1 + 1)\lambda_0 + (\lambda_1 - 1))z^3 + (((-\alpha\lambda_1 + 1)\lambda_0 + (\alpha - 1))\lambda_{\infty} + (((\alpha + 1)\lambda_1 - 2)\lambda_0 + (-\lambda_1 + (-\alpha + 2))))z^2} {((-\lambda_1 + 1)\lambda_0 + (\lambda_1 - 1))\lambda_{\infty}z^2 + (((\lambda_1 - \alpha)\lambda_0 + ((-\alpha - 1)\lambda_1 + 2\alpha))\lambda_{\infty} + ((\alpha - 1)\lambda_0 + (\alpha\lambda_1 + (-2\alpha + 1))))z}\\
            &\frac{ + ((\alpha\lambda_1 - \alpha)\lambda_0\lambda_{\infty} + (-\alpha\lambda_1 + \alpha)\lambda_0)z)}{ + ((\alpha\lambda_1 - \alpha)\lambda_{\infty} + (-\alpha\lambda_1 + \alpha)))}
        \end{align*}
        }
        Where the fixed points are $\{0,1,\infty,\alpha\}$ with corresponding multipliers $\{\lambda_0,\lambda_1,\lambda_{\infty}, \lambda_{\alpha}\}$ and
        \begin{equation*}
            \lambda_{\alpha} = \frac{1}{\frac{1}{1-\lambda_0} + \frac{1}{1-\lambda_{\infty}} + \frac{1}{1-\lambda_1}-1}+1.
        \end{equation*}
    \end{thm}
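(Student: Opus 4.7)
The plan is to take as my starting point the parametrization already established in the proof of Theorem~\ref{thm_deg_tau_3}, where after applying a $\PGL_2$ conjugation the fixed points are moved to $\{0,1,\infty,\alpha\}$ and the map is written in the form
\begin{equation*}
\bar{\phi}(z) = \frac{a_1 z^3 + a_2 z^2 + a_3 z}{b_2 z^2 + b_3 z + b_4}.
\end{equation*}
That proof already supplies closed-form expressions
\[
b_2 = \lambda_\infty a_1,\quad a_3 = \lambda_0 b_4,\quad b_4 = \frac{\alpha(\lambda_\infty-1) a_1}{1-\lambda_0},\quad b_3 = \frac{(1-\lambda_1\lambda_\infty)a_1 + (2-\lambda_0-\lambda_1) b_4}{\lambda_1 - 1},
\]
together with $a_2 = (b_2+b_3+b_4)-a_1-a_3$, leaving a single overall scaling parameter $a_1$ that is free because $\bar{\phi}$ is unchanged under simultaneous scaling of numerator and denominator.

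My first step is to make a clever choice of $a_1$ to clear all denominators. The two denominators that appear above are $1-\lambda_0$ and $\lambda_1 - 1$, so I set $a_1 = (\lambda_1-1)(1-\lambda_0)$. With this choice, $a_1 = -(-\lambda_1+1)\lambda_0 - (1-\lambda_1) \cdot(-1) = (-\lambda_1+1)\lambda_0 + (\lambda_1-1)$, which is exactly the leading coefficient of the numerator in the statement. The value $b_2 = \lambda_\infty a_1$ then matches the leading denominator coefficient in the statement. The next step is to substitute this choice back into the formulas for $b_3$, $b_4$, $a_3$, and finally $a_2 = \lambda_\infty a_1 + b_3 + b_4 - a_1 - \lambda_0 b_4$, and collect coefficients.

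Once substituted, the remaining work is a routine but lengthy polynomial simplification: expand each of the five resulting coefficients as a polynomial in $\lambda_0,\lambda_1,\lambda_\infty,\alpha$, then regroup by collecting in $\lambda_\infty$ first and then in $\lambda_0$ inside each $\lambda_\infty$-coefficient. This organization matches exactly the nested form $((\cdots)\lambda_0 + (\cdots))\lambda_\infty + ((\cdots)\lambda_0 + (\cdots))$ displayed in the theorem, so the identification can be checked term by term. I expect the main obstacle to be purely bookkeeping: there is no conceptual difficulty, but one needs to be careful about signs and about the cross-terms involving $\alpha\lambda_0\lambda_1$, $\alpha\lambda_1\lambda_\infty$, and $\alpha\lambda_0\lambda_\infty$, which all appear in the middle coefficients. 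In practice one would verify the simplification with a computer algebra system such as Mathematica or Magma, exactly as was done for the explicit $P_3$ formulas earlier in the paper.

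Finally, the formula for $\lambda_\alpha$ is an immediate consequence of the identity $\sum_{i=0}^{d}\frac{1}{1-\lambda_i}=1$ from \cite[Theorem~1.14]{Silverman10}, already invoked in the proof of Theorem~\ref{thm_sigma_relation}: solving this relation in the case $d=3$ with fixed points $\{0,1,\infty,\alpha\}$ for the unknown $\lambda_\alpha$ yields precisely the stated expression. The hypothesis that the fixed points are distinct ensures $\lambda_i\neq 1$ for each $i$, so every denominator in both the substitution and in the $\lambda_\alpha$ formula is nonzero, and the normal form is well-defined on the entire open locus in question.
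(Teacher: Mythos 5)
Your proposal is correct and follows essentially the same route as the paper: the paper's own proof simply points back to the parametrization established at the start of the proof of Theorem~\ref{thm_deg_tau_3} (fixed points at $0,1,\infty,\alpha$, coefficients solved in terms of $\lambda_0,\lambda_1,\lambda_\infty,\alpha$) and verifies the explicit expansion with a computer algebra system, with $\lambda_\alpha$ determined by the relation $\sum_i \frac{1}{1-\lambda_i}=1$. Your additional observation that choosing $a_1=(\lambda_1-1)(1-\lambda_0)$ clears the denominators and reproduces the displayed leading coefficients is a helpful explicit detail the paper leaves implicit, but it does not change the argument.
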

    \begin{proof}
        The details are identical to the beginning of the proof of Theorem \ref{thm_32} in Section \ref{sect_degree3}, so we merely state the outline.  The explicit calculation was carried out in Pari \cite{pari232}.

        We conjugate $\phi$ so that $\{0,1,\infty\}$ are fixed points with multipliers $\{\lambda_0,\lambda_1,\lambda_{\infty}\}$.  Labeling the last fixed point as $\alpha$, we have the equation $\phi(\alpha)=\alpha$ allowing us to determine the form of $\phi$ depending only on the choices of $\{\lambda_0,\lambda_1,\lambda_{\infty}, \alpha\}$.

\begin{code}
\begin{verbatim}
//Pari/gp code
la=1-1/(1-(1-l8) - 1/(1-l1) - 1/(1-l0))
b2=l8*a1;
a3=l0*b4; \\multiplier of 0
a2=(b2 + b3 + b4) - a1 - a3; \\1 is fixed
b3=((1-l1*l8)*a1 + (2-l0-l1)*b4)/(l1-1); \\multiplier of 1
f(x) =  (a1*x^3 + a2*x^2 + a3*x)/(b2*x^2 + b3*x + b4)
b4=-(((-a^3 + a^2)*l1 + (a^3 - a^2))*a1*l8 + ((a^3 - a^2)*l1 +
(-a^3 + a^2))*a1)/(((-a^2 + a)*l1 + (a^2 - a))*l0 + ((a^2 - a)*l1 + (-a^2 + a)))

f(x) =  (a1*x^3 + a2*x^2 + a3*x)/(b2*x^2 + b3*x + b4)
\end{verbatim}
\end{code}
    \end{proof}

\providecommand\biburl[1]{\texttt{#1}}

\end{document}